\newcommand{\IM}{\operatorname{\mathcal{M}}}
\newcommand{\LL}{\operatorname{\mathcal{L}}}
\newcommand{\PP}{\operatorname{\mathfrak{P}}}
\newcommand{\Si}{\dot{S}}
\newcommand{\CR}{\bar{\partial}}
\newcommand{\ev}{\operatorname{ev}}
\newcommand{\QH}{\operatorname{QH}}
\newcommand{\del}{\partial}
\newcommand{\IC}{\operatorname{\mathbb{C}}}
\newcommand{\IR}{\operatorname{\mathbb{R}}}
\newcommand{\IN}{\operatorname{\mathbb{N}}}
\newcommand{\Ih}{\operatorname{\mathbf{h}}}
\newcommand{\Q}{\operatorname{Q}}
\newcommand{\V}{\operatorname{V}}
\newcommand{\T}{\operatorname{\mathcal{T}}}
\renewcommand{\LL}{\operatorname{\mathcal{L}}}
\renewcommand{\P}{\operatorname{\mathcal{P}}}
\newtheorem{theorem}{Theorem}[section]
\newtheorem{proposition}[theorem]{Proposition}
\newtheorem{corollary}[theorem]{Corollary}
\title{Note on Floer theory and \\integrable hierarchies}
\author{Oliver Fabert}
\thanks{O. Fabert, VU Amsterdam, The Netherlands. Email: oliver.fabert@gmail.com}
\begin{document}

\maketitle

\begin{abstract}
In this short note we show how Dubrovin's integrable hierarchies, defined using the Gromov-Witten theory of a closed symplectic manifold, generalizes to Hamiltonian Floer theory. In particular, we show how the required generalization of the PSS isomorphism, relating Gromov-Witten theory and Hamiltonian Floer theory, can be constructed in the framework of Eliashberg-Givental-Hofer's symplectic field theory\end{abstract}

\tableofcontents
\markboth{O. Fabert}{Floer theory and } 

\section*{Summary}
The Floer theory of Hamiltonian symplectomorphisms is an important tool in symplectic geometry. Floer cohomology was invented by A. Floer to prove the Arnold conjecture about the number of symplectic fixed points and since then was improved to answer many other questions in symplectic geometry. Following M. Schwarz and P. Seidel (\cite{Sch}), there exists the so-called pair-of-pants product in Floer cohomology. Apart from the Arnold conjecture for degenerate Hamiltonians, it is used in proofs of the Conley conjecture and plays a crucial role in the definition of symplectic quasimorphisms. While the pair-of-pants product involves the Floer cohomology groups of different Hamiltonian symplectomorphisms, it in particular defines a graded commutative and associative product on the sum of the Floer cohomologies of all powers of a given Hamiltonian symplectomorphism $\phi$.\\

The applications of the pair-of-pants product build on its relation with the small quantum product of the underlying symplectic manifold. Following Piunikhin-Salamon-Schwarz (\cite{PSS}) there exists an ring isomorphism between Floer cohomology with its pair-of-pants product and the small quantum cohomology ring. On the other hand, the small quantum product only involves a very small part of rational Gromov-Witten theory, since it just counts holomorphic spheres with three marked points. In order to use the geometric information of all rational Gromov-Witten invariants, it is known, see \cite{MDSa}, that there also exists a big version of the quantum cup product, which recovers the full rational Gromov-Witten potential. Following B. Dubrovin, it is the main ingredient to equip the quantum cohomology with the structure of a Frobenius manifold. \\
    
In \cite{FF1}, \cite{FF2} we show how the big quantum product and the Frobenius manifold structure translate to the Floer theory of a Hamiltonian symplectomorphism $\phi$, extending the relation between the small quantum product and the pair-of-pants product. On the other hand, Dubrovin has shown, see \cite{DZ}, that one can assign an integrable hierarchy to each Frobenius manifold. In this note we show how Dubrovin's integrable hierarchy translates to Hamiltonian Floer theory and show how the required generalization of the PSS isomorphism can be defined using the algebraic formalism of Eliashberg-Givental-Hofer's symplectic field theory.

\section{Symplectic field theory of Hamiltonian mapping tori}

The goal of this note is to show how to define Dubrovin's integrable hierarchy defined on the loop space of the small quantum cohomology ring of a closed symplectic manifold generalizes to Hamiltonian Floer theory. The resulting new algebraic structures will be defined in an extension of Eliashberg-Givental-Hofer's symplectic field theory for mapping tori. \\

For this we first quickly review the geometric setup for the symplectic field theory of Hamiltonian mapping tori; for details we refer to \cite{FF1}. We start with the observation that (parametrized) one-periodic Hamiltonian orbits $x: S^1\to M$ are in one-to-one correspondence with unparametrized one-periodic orbits $\gamma$ of the canonical vector field $\del_t$ on the corresponding mapping torus $M_{\phi}=\IR\times M/\{(t,p)\sim (t+1,\phi(x))\}$ by setting $\gamma: S^1\to M_{\phi}$, $\gamma(t)=(t,x)$ where $x$ is viewed as the corresponding fixed point. Following (\cite{E},example 1.2), see also \cite{F1}, note that $M_{\phi}$ naturally carries a stable Hamiltonian structure in the sense of \cite{BEHWZ} given by $(\tilde{\omega}=\omega,\tilde{\lambda}=dt)$  with Reeb vector field $\tilde{R}=\del_t$. As described in \cite{F1}, the stable Hamiltonian manifold $M_{\phi}$ can be identified with $S^1\times M$ equipped with the $H$-dependent stable Hamiltonian structure $(\tilde{\omega}^H=\omega+dH_t\wedge dt,\tilde{\lambda}^H=dt)$ with Reeb vector field $\tilde{R}^H=\del_t+X^H_t$, where the underlying diffeomorphism between $M_{\phi}$ and $S^1\times M$ is given by the Hamiltonian flow, $S^1\times M\to M_{\phi}$, $(t,p)\mapsto (t,\phi^t_H(p))$. \\

While the closed orbits of period one are in bijection with the fixed points $x$ in $\P(\phi)$, note that for general $k\in\IN$ the fixed points in $\P(\phi^k)$ are in $k$-to-one-correspondence with closed orbits of period $k$ when the underlying orbit is simple. For this observe that for every fixed point $x\in\P(\phi^k)$ the points $\phi(x),\ldots,\phi^{k-1}(x)$ are also fixed points of $\phi^k$.  Note that the Conley-Zehnder indices (mod $2$) of $\phi^i(x)$ agree for all $i=0,\ldots,k-1$ by symmetry reasons, since the corresponding one-periodic orbits just differ by reparametrization and the spanning surface $u$ for $x$ naturally defines spanning surfaces for all $\phi^i(x)$. On the other hand, $x,\phi(x),\ldots,\phi^{k-1}(x)$ all represent the same unparametrized $k$-periodic Reeb orbit $\gamma$. Without further mentioning, we will only consider closed Reeb orbits where the underlying parametrized orbits in $M$ are contractible. Then the Conley-Zehnder index of $\gamma$ defined in \cite{EGH} agrees with the Conley-Zehnder index of $x$ (we can use the same spanning surfaces to define the index for $\gamma$).  More precisely, there is indeed a one-to-one correspondence between the $k$ fixed points and the $k$ special points that we have chosen on $\gamma$ above. \\

\section{Floer theory and commuting Hamiltonian systems in SFT}

It was shown by B. Dubrovin, see \cite{DZ}, that to every Frobenius manifold $\operatorname{Q}$ one can assign an infinite-dimensional integrable system on the loop space $\Lambda\operatorname{Q}$ of $\Q$. By definition, it consists of an infinite sequence of linearly independent commuting Hamiltonian functions which span the space of symmetries of the first Hamiltonian, see \cite{DZ} for the precise definition. For this recall that in Gromov-Witten theory the integrable system appears as flat coordinates for the deformed flat connection $\tilde{\nabla}$ on the cotangent bundle to the Frobenius manifold $\Q$ times $\IC^*$ given by the flat metric on $\Q$, the big quantum product $\star$ and the Euler vector field $E$. \\ 

While we show in \cite{FF2} that the big pair-of-pants product defines a (1,2)-tensor field $\star\in\T^{(1,2)}\Q_X$ on the differential graded manifold of contact homology, it is not clear how to define the analogue of Dubrovin's flat connection as other structures such as the Euler vector field $E\in\T^{(1,0)}\Q$ and the canonical flat structure do \emph{not} descend to well-defined structures on the differential graded manifold $\Q_X$ in general. As a consequence, the classical approach to integrable systems does not generalize immediately from Gromov-Witten theory to Floer theory. As a bypass we will use that, as outlined by Y. Eliashberg in his ICM plenary talk \cite{E}, the integrable system of the Gromov-Witten theory of $(M,\omega)$ naturally arises in the rational symplectic field theory (SFT) of the mapping torus $S^1\times M$ with vanishing Hamiltonian, see the paper \cite{Ro} of P. Rossi. Instead of trying to generalize the classical approach starting from the cohomology F-manifold, we will follow the latter approach as it leads in a much more natural way to the desired generalization of the integrable systems. \\

Indeed, using symplectic field theory one gets an infinite system of commuting Hamiltonians on the rational SFT of the mapping torus of every symplectomorphisms on a closed symplectic manifold, which agrees with the integrable system from Gromov-Witten theory in the case when the symplectomorphism is the identity. We emphasize that this observation was the starting point for our project of relating Floer theory, Frobenius manifolds and integrable systems, and even guided us to our definition of the big pair-of-pants product and its relation to cohomology F-manifolds. While for the definition of the cohomology F-manifold we used the full contact homology of mapping tori, the commuting Hamiltonian systems naturally live on their rational SFT homology, whose differential now counts holomorphic curves with an arbitrary number of positive (and negative) cylindrical ends and whose chain complex contains the full contact homology complex as a subcomplex. The reason is that the rational SFT homology naturally carries a Poisson bracket, which in turn leads to the natural appearance of commuting Hamiltonian systems in SFT, see \cite{E} and \cite{F3}. Apart from the fact that we claim that the cohomology F-manifold structure on contact homology indeed can be extended to rational SFT after introducing additional marked points on the curves (in order to model nodal breakings which now need to be included), the system of commuting Hamiltonian functions on rational SFT still restricts to a system of commuting vector fields on the differential graded manifold of full contact homology. \\ 

\subsection{Commuting Hamiltonians on rational SFT homology}
Following \cite{EGH},\cite{F3} we start with reviewing the appearance of the commuting Hamiltonian systems in the rational SFT. \\

Rational SFT is a generalization of contact (co)homology in the sense that its definition involves moduli spaces of holomorphic maps with not just arbitrary many negative ends, but also arbitrary many positive ends, see \cite{EGH} for details. For two ordered sets $\Gamma^+=(\gamma_0^+,\ldots,\gamma_{r^+}^+)$ and $\Gamma^-=(\gamma_0^-,\ldots,\gamma_{r^-}^-)$ of closed orbits $\gamma_1^{\pm},\ldots,\gamma_{r^{\pm}}^{\pm}$ the moduli space $\IM_r(\Gamma^+;\Gamma^-)$ consists of equivalence classes of tuples $(\tilde{u},(z_1^+,\ldots,z_{r^+}^+),(z_1^-,\ldots,z_{r^-}^-),(z_1,\ldots,z_r))$ with equivalence relation given by the action of the automorphisms of the domain and the $\IR$-action on the target, see \cite{EGH} for details. Here $(z_1^+,\ldots,z_{r^+}^+)$, $(z_1^-,\ldots,z_{r^-}^-)$ and $(z_1,\ldots,z_r)$ are disjoint collections of marked points on $S^2$ and $\tilde{u}=(h,u)$ is a $J$-holomorphic map from the punctured sphere $\Si=S^2\backslash\{z_1^{\pm},\ldots,z_{r^{\pm}}^{\pm}\}$ to $\IR\times M_{\phi}\cong S^1\times M$ converging to the closed orbits $\gamma_i^{\pm}$ in the positive/negative cylindrical end near $z_i^{\pm}$, $i=1,\ldots,r^{\pm}$. In particular, the induced map $h:\Si\to\IR\times S^1$ again defines a branched covering from $S^2$ to itself with branch points $z_i^{\pm}$ of order $k_i^{\pm}$, $i=1,\ldots,r^{\pm}$ over $\infty$ and $0$, respectively. Note that when $\Gamma^+$ consists of a single orbit $\gamma^+$ and there no additional marked points ($r=0$), then we just get back the moduli spaces of contact homology from before, $\IM^{\gamma^+}(\Gamma^-)=\IM_0(\gamma^+;\Gamma^-)$. \\

As in Gromov-Witten theory we can use the additional marked points $z_1,\ldots,z_r$ to define evaluation maps $$\ev=(\ev_1,\ldots,\ev_r): \IM_r(\Gamma^+;\Gamma^-) \to M_{\phi}^r\cong (S^1\times M)^r$$ given by $$\ev_i(\tilde{u},(z_1^{\pm},\ldots,z_{r^{\pm}}^{\pm}),(z_1,\ldots,z_r))\mapsto \tilde{u}(z_i)=(h(z_i),u(z_i)),\; i=1,\ldots,r,$$ which can be used to pullback differential forms from the target. Recalling that the cohomology ring of the Hamiltonian mapping torus $M_{\phi}\cong S^1\times M$ is given by $$H^*(M_{\phi}) \,=\, H^0(S^1)\otimes H^*(M) \,\oplus\,  H^1(S^1)\otimes H^{*-1}(M), $$ we now choose a string of differential forms $\theta_1,\ldots,\theta_K$ on $M_{\phi}$, where we will assume that the forms represent a basis of the first summand, $\theta_{\alpha}\in H^0(S^1)\otimes H^*(M) \cong H^*(M)$. \\

In contrast to the case of contact homology described before, we assign to each closed orbit $\gamma$ now \emph{two} formal graded variables $p_{\gamma}$ and $q_{\gamma}$ with $|p_{\gamma}|=|q_{\gamma}|\;\textrm{mod} 2$. As in Gromov-Witten theory we further assign to each cohomology class $\theta_{\alpha}$, $\alpha=1,\ldots,K$ a formal graded variable $\tau_{\alpha}$ with $|\tau_{\alpha}| = 2 - \deg \theta_{\alpha}\;\textrm{mod} 2$. They can again be viewed as coordinates of a super space $\V$, which contains the coordinate super space $\Q$ of contact homology after setting $p=0=t$ for $p=(p_{\gamma})$, $\tau=(\tau_{\alpha})$, but now carries a natural symplectic super-form $\sum_{\gamma} dp_{\gamma}\wedge dq_{\gamma}$  (in the formal sense). In particular, the space of functions $\PP=\T^{(0,0)}\V$ now carries a (graded) Poisson bracket $\{\cdot,\cdot\}:\PP\otimes\PP\to\PP$. \\

Using the moduli spaces defined above we can define the rational SFT Hamiltonian $\Ih\in\PP=\T^{(0,0)}\V$ of (rational) SFT as the sum over all $\Gamma^+$, $\Gamma^-$, $I$, where the coefficient in front of the monomial $\tau^Ip^{\Gamma^+}q^{\Gamma^-}t^{\omega(A)}$ with $p^{\Gamma^+}=p_{\gamma^+_1} \ldots p_{\gamma^+_{r^+}}$, $q^{\Gamma^-}=q_{\gamma^-_1} \ldots q_{\gamma^-_{r^-}}$, $\tau^I=\tau_{\alpha_1} \ldots \tau_{\alpha_r}$ is given by 
\begin{equation*}
 \frac{1}{r! r^+! r^-!}\frac{1}{\kappa^{\Gamma^+}\kappa^{\Gamma^-}} \int_{\IM_r(\Gamma^+;\Gamma^-;A)}
 \ev_1^*\theta_{\alpha_1}\wedge\ldots\wedge\ev_r^*\theta_{\alpha_r}.
\end{equation*}

It was shown in \cite{F3} that for each differential form one can define an infinite sequence of commuting Hamiltonians $\Ih_{\alpha,j}\in\T^{(0,0)}\V$. After introducing a special additional marked point $z_0$, we use that as well-known in Gromov-Witten theory there is a tautological line bundle $\LL$ over each moduli space $\IM_{r+1}(\Gamma^+;\Gamma^-)$, whose fibre over $(\tilde{u},z_0)= (h,u,(z_1^{\pm},\ldots,z_{r^{\pm}}^{\pm}),(z_0,z_1,\ldots,z_r))$ is given by the cotangent line $\LL_{(\tilde{u},z_0)} = T^*_{z_0}\Si$, and which extends smoothly over the compactified moduli space. Since the SFT moduli spaces have codimension-one boundary,  as discussed in subsection 2.3 it however does not make sense to integrate powers of the Euler class (= first Chern class) over the moduli space. Instead we have introduced in \cite{F3} the notion of (generic) coherent collections of (multi-valued) sections $(s)$ in the tautological line bundles over all moduli spaces, whose zero sets $\IM^1_{r+1}(\Gamma^+,\Gamma^-)=s^{-1}(0)$ in all $\IM_{r+1}(\Gamma^+;\Gamma^-)$ can be viewed as (Poincare dual) of a coherent Euler class involving all moduli spaces at once, see \cite{F3} for details.  \\

Choosing $j$ generic coherent collections of sections $(s_j)$ and defining $$\IM^j_{r+1}(\Gamma^+,\Gamma^-)=s_1^{-1}(0)\cap\ldots\cap s_j^{-1}(0) \subset \IM_{r+1}(\Gamma^+;\Gamma^-),$$ we define the desired sequence of Hamiltonians $\Ih_{\alpha,j}\in\T^{(0,0)}\V$ again as the sum over all $\Gamma^+$, $\Gamma^-$, $I$, where the coefficient in front of the monomial $ \tau^Ip^{\Gamma^+}q^{\Gamma^-}t^{\omega(A)}$ is now given by
\begin{equation*}
\frac{1}{r! r^+! r^-!} \frac{1}{\kappa^{\Gamma^+}\kappa^{\Gamma^-}}\int_{\IM^j_{r+1}(\Gamma^+,\Gamma^-,A)}
 \ev_0^*(\theta_{\alpha}\wedge dt)\wedge \ev_1^*\theta_{\alpha_1}\wedge\ldots\wedge\ev_r^*\theta_{\alpha_r},
\end{equation*}
with the canonical one-form $dt\in H^1(S^1) = H^1(S^1)\otimes H^0(M)\subset H^1(M_{\phi})$ given by the $S^1$-coordinate $t$ on $M_{\phi}\cong S^1\times M$. \\

It was shown in \cite{EGH} that the rational SFT Hamiltonian $\Ih\in\PP$ satisfies the master equation $\{\Ih,\Ih\}=0$. Similar as in contact homology this is equivalent to the fact that the symplectic gradient $X=X^{\Ih}\in\T^{(1,0)}\V$ of $\Ih$ with respect to the above formal symplectic super-form is an odd homological vector field on $\V$ and hence again defines a differential graded manifold $\V_X=(\V,X)$ with tensor fields $\T^{(r,s)}\V_X:=H^*(\T^{(r,s)}\V,\LL_X)$. Furthermore, as in contact homology, for two different choices of auxiliary data like almost complex structure and (domain-dependent) Hamiltonian perturbations, the resulting differential graded manifolds $(\V^+,X^+)$ and $(\V^-,X^-)$ are isomorphic.  The following generalisation of this result for the commuting Hamiltonians was shown in \cite{F3}. \\

\begin{theorem} Since $\{\Ih,\Ih_{\alpha,j}\}=X(\Ih_{\alpha,j})=0$, the Hamiltonians $\Ih_{\alpha,j}$ descend to  a sequence of functions on the differential graded manifold given by $(\V,X)$, which pairwise commute with respect to the Poisson bracket on $\T^{(0,0)}\V_X$, $$\{\Ih_{\alpha,j},\Ih_{\beta,k}\}=0\in\T^{(0,0)}\V_X.$$ Furthermore they are independent under choices of auxiliary data like almost complex structure and Hamiltonian perturbations in the sense that the under the isomorphism of rational SFT established in \cite{EGH} they get mapped to each other, $$\T^{(0,0)}\V^+_{X^+}\stackrel{\cong}{\rightarrow}\T^{(0,0)}\V^-_{X^-},\;\Ih^+_{\alpha,j}\mapsto\Ih^-_{\alpha,j},\;\alpha=1,\ldots,K, j\in\IN.$$
\end{theorem}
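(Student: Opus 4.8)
The plan is to treat the three assertions in turn, in each case reducing a statement about the Poisson algebra on $\T^{(0,0)}\V$ to an SFT compactness statement for one-dimensional moduli spaces. First, I would note that the descent to cohomology is immediate from the defining property of the cohomological vector field: since $X=X^{\Ih}$ is the symplectic gradient of $\Ih$ with respect to the formal symplectic super-form on $\V$, one has $\LL_X f=X(f)=\{\Ih,f\}$ for every $f\in\T^{(0,0)}\V$, so the stated hypothesis $\{\Ih,\Ih_{\alpha,j}\}=0$ says precisely that each $\Ih_{\alpha,j}$ is $\LL_X$-closed and hence defines a class in $\T^{(0,0)}\V_X=H^*(\T^{(0,0)}\V,\LL_X)$. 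This hypothesis is itself the algebraic shadow of the SFT compactness theorem of \cite{BEHWZ}: the cut-down spaces $\IM^j_{r+1}(\Gamma^+,\Gamma^-)/\IR$ compactify, one dimension up, to one-manifolds whose codimension-one boundary consists of two-level buildings, and the coherency of the collections $(s_j)$ ensures that the sections restrict compatibly to these strata so that the signed boundary count assembles exactly into $\{\Ih,\Ih_{\alpha,j}\}$, which vanishes because it is the boundary of a compact one-manifold.

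For commutativity I would introduce moduli spaces carrying two special marked points $z_0,z_0'$, cut the first down by $j$ coherent collections of sections and the second by $k$, and integrate $\ev_{z_0}^*(\theta_\alpha\wedge dt)\wedge\ev_{z_0'}^*(\theta_\beta\wedge dt)$ over the resulting zero loci; counting these defines an auxiliary Hamiltonian $\Ig_{\alpha j,\beta k}\in\T^{(0,0)}\V$. Applying the same compactness analysis one dimension up, the codimension-one boundary now splits according to whether the two special points lie on the same level of the broken building or on opposite levels: the first type of stratum reassembles into $\{\Ih,\Ig_{\alpha j,\beta k}\}$, while the strata with $z_0$ and $z_0'$ on opposite levels reassemble into $\pm\{\Ih_{\alpha,j},\Ih_{\beta,k}\}$. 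The vanishing total boundary count then yields a master equation of the form $$\{\Ih_{\alpha,j},\Ih_{\beta,k}\}\;=\;\pm\{\Ih,\Ig_{\alpha j,\beta k}\}\;=\;\pm\,X(\Ig_{\alpha j,\beta k}),$$ which exhibits the left-hand side as $\LL_X$-exact, so that $\{\Ih_{\alpha,j},\Ih_{\beta,k}\}=0$ in $\T^{(0,0)}\V_X$.

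For the invariance statement I would run the cobordism argument underlying the SFT isomorphism. Given two choices $(H^\pm,J^\pm)$ and an interpolating structure $\hat{J}$ on the trivial symplectic cobordism $\RS\times M$, I would count $\hat{J}$-holomorphic curves with one special marked point in the cobordism moduli spaces $\widehat{\IM}$, cut down by a collection of sections interpolating between the coherent choices $(s^\pm)$ on the two ends, so as to define the chain-level image of $\Ih_{\alpha,j}$ under the morphism $\varphi$. Analysing the codimension-one boundary of the corresponding one-dimensional spaces exactly as above produces $\varphi(\Ih^+_{\alpha,j})-\Ih^-_{\alpha,j}=\pm\,X^-(\cdots)$, so the two Hamiltonians agree after passing to $\T^{(0,0)}\V_X$; together with the already established isomorphism $\varphi$ this gives the asserted correspondence $\Ih^+_{\alpha,j}\mapsto\Ih^-_{\alpha,j}$.

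The hard part is not the formal Poisson algebra but the construction of the coherent (multi-valued) sections of the tautological line bundles and the bookkeeping that makes the boundary strata assemble \emph{precisely} into the Poisson brackets above. Reconciling simultaneous transversality with the required symmetry forces one to work with multi-sections in the sense of \cite{CMS}, and one must carry the combinatorial weights $1/(r!\,r^+!\,r^-!)$, the multiplicity factors $1/\kappa^{\Gamma^\pm}$, and the signs from the $\IZ_2$-grading correctly through every gluing. These points, together with the underlying SFT compactness and gluing analysis, are exactly what is established in \cite{EGH} and \cite{F3}, to which I refer for the full details.
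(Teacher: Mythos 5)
The paper itself contains no proof of this theorem: it is stated as a result ``shown in \cite{F3}'', so the only in-paper argument is the citation, and your proposal correctly reconstructs the strategy of that reference. Specifically, your three steps --- descent to $\T^{(0,0)}\V_X$ from $\LL_X\Ih_{\alpha,j}=\{\Ih,\Ih_{\alpha,j}\}=0$, exactness of $\{\Ih_{\alpha,j},\Ih_{\beta,k}\}$ obtained from the codimension-one boundary of moduli spaces carrying two special marked points cut down by coherent (multi-)sections, and invariance via a cobordism count with sections interpolating between the coherent collections $(s^{\pm})$ --- are essentially the same approach as the cited proof, with the genuinely hard analytic and combinatorial content (coherency, multi-sections in the sense of \cite{CMS}, weights and signs) correctly identified and deferred to \cite{EGH} and \cite{F3}.
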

$ $\\

\subsection{Relation with Dubrovin's integrable hierarchies}
We now want to discuss the relation of the commuting Hamiltonian systems of the SFT of $M_{\phi}$ with the Floer theory of the underlying symplectomorphism $\phi$ as well as with the (dispersionless) integrable hierarchies that Dubrovin-Zhang assign to the Frobenius manifold of rational Gromov-Witten theory of the underlying symplectic manifold in \cite {DZ}. \\

Note that in the case when the symplectomorphism $\phi$ is the identity, it was already observed in \cite{EGH} that the SFT of $S^1\times M$ (more in general, of every circle bundle $S^1\to V\to (M,\omega))$ is determined by the Gromov-Witten theory of the underlying symplectic manifold $(M,\omega)$. This in turn was used in \cite{E}, see also \cite{R}, to prove that the commuting Hamiltonian system $\Ih_{\alpha,j}$ for $S^1\times M$ indeed agrees with the integrable system obtained from the Frobenius manifold determined by the Gromov-Witten potential of $(M,\omega)$. \\

In view of the fact that the Floer theory of a symplectomorphism $\phi$ generalizes the Gromov-Witten theory of $M$ in the same way as the SFT of its mapping torus $M_{\phi}$ generalizes the SFT of $S^1\times M$, we obtain following Floer generalization of this result. \\

With the above choice of differential forms $\theta_1,\ldots,\theta_K$ and making use of the canonical one-form $dt\in H^1(S^1\times M)$ as in the definition, every commuting Hamiltonian $\Ih_{\alpha,j}$ is counting Floer solution with additional marked points. For the following statement we restrict to the case $j=0$ as in \cite{EGH}, the generalization to arbitrary $j\in\IN$ is then obvious.

\begin{proposition}  The coefficient of the monomial $\tau^Ip^{\Gamma^+}q^{\Gamma^-}t^{\omega(A)}$ in $\Ih_{\alpha,0}$ is given by the integral of the pullback $\ev_0^*\theta_{\alpha}\wedge \ev_1^*\theta_{\alpha_1}\wedge\ldots\wedge\ev_r^*\theta_{\alpha_r}$ (no $dt$ !) of forms over the moduli space of Floer solutions $(u,(z_1^{\pm},\ldots,z_{r^{\pm}}^{\pm}),(z_0,z_1,\ldots,z_r))$ with $r$ additional marked points. More precisely, $u:\Si\to M$ satisfies the Floer equation $\CR_{J,H,h}(u)=\Lambda^{0,1}(du+X^H_{h_2}\otimes dh_2)$, where the holomorphic map $h=(h_1,h_2):\Si=S^2\backslash\{z_1^{\pm},\ldots,z_{r^{\pm}}^{\pm}\}\to\IR\times S^1$ is now determined by the requirement $h(z_0)=(0,0)\in\IR\times S^1$. \end{proposition}

Instead of selecting three marked points as in the definition of the big pair-of-pants product, we now use that integrating the pullback $\ev_0^*dt$ of the canonical one-form on the mapping torus is equivalent to requiring that for every element $u=(u,(z_1^{\pm},\ldots,z_{r^{\pm}}^{\pm}),(z_0,\ldots,z_r))$ the special marked point $z_0$ (used to define $\ev_0$) gets mapped to a fixed point on $S^1$ under the induced map $h=\pi\circ u:\Si=S^2\backslash\{z_1^{\pm},\ldots,z_{r^{\pm}}^{\pm}\}\to\IR\times S^1$. Note that when $\phi$ is the identity map, then $h_0^*(\IR\times M_{\phi})=\Si\times M$ and $u:\Si\to M$ is a holomorphic sphere in $M$ with additional marked points $z_1^{\pm},\ldots,z_{r^{\pm}}^{\pm},z_0,z_1,\ldots,z_r$. \\

On the other hand, it immediately follows from theorem 2.17 that for any chosen Hamiltonian symplectomorphism the system of commuting Hamiltonians is naturally isomorphic to the system of commuting Hamiltonians when the Hamiltonian is equal to zero. Since in the latter case it is shown in \cite{Ro} that the system of commuting Hamiltonians agrees with the dispersionless integrable hierarchy that Dubrovin-Zhang assign to the Frobenius manifold of rational Gromov-Witten theory of the underlying symplectic manifold in \cite {DZ}, we obtain the following immediate

\begin{corollary} The isomorphism $\V_X\cong\Lambda\QH^*(M)$ maps the system of commuting Hamiltonians $\Ih_{\alpha,j}\in \T^{(0,0)}\V_X$ to the integrable system defined by Dubrovin using the Frobenius manifold structure on $\QH^*(M)$ given by all rational Gromov-Witten invariants of the underlying symplectic manifold. \end{corollary}

We end this section with a short discussion about how the commuting Hamiltonians on rational SFT help us to find a substitute for the rational Gromov-Witten potential in the Floer theory of a Hamiltonian symplectomorphism. \\

For this let $F\in\T^{(0,0)}\V$ be the generating function, whose coefficient in front of the monomial $\tau^Ip^{\Gamma^+}q^{\Gamma^-}t^{\omega(A)}$ is given by the integral of the pullback $\ev_1^*\theta_{\alpha_1}\wedge\ldots\wedge\ev_r^*\theta_{\alpha_r}$ of forms over the moduli space of Floer solutions $ (u,(z_1^{\pm},\ldots,z_{r^{\pm}}^{\pm}),(z_0,z_1,\ldots,z_r))$ determined by $\Gamma^+$ and $\Gamma^-$. While we claim that $F$ does \emph{not} lie in the kernel of the symplectic vector field $X\in\T^{(1,0)}\V$ of rational SFT, we claim that the above generating function agrees with the first descendant Hamiltonian $\Ih_{0,1}$ for the canonical zero-form up to a natural weighting factor in front of its summands. \\

Indeed, since by the above theorem $\Ih_{0,1}$ counts holomorphic sections with one additional marked point carrying one psi class, it follows from (an analogue of) the dilaton equation that the coefficient of $\Ih_{0,1}$ in front of each monomial agrees with the coefficient of $F$ multiplied with the Euler characteristic of the underlying punctured sphere. While $\Ih_{0,1}$ and $F$ hence carry the same geometrical information (since the Euler characteristic is nonzero for spheres with three or more marked points), the first descendant Hamiltonian $\Ih_{0,1}$ (in contrast to $F$) indeed defines an invariant of the symplectomorphism $\phi$.


\begin{thebibliography}{}

\bibitem{BEHWZ} Bourgeois, F., Eliashberg, Y., Hofer, H., Wysocki, K. and Zehnder, E., Compactness results in 
      symplectic field theory, Geom. and Top. 7, pp. 799-888 (2003) 

\bibitem{DZ} Dubrovin, B., Zhang, Y, Normal forms of hierarchies of integrable PDEs, Frobenius manifolds and Gromov-Witten invariants, ArXiv preprint math.DG/0108160 (2001).

\bibitem{E} Eliashberg, Y., Symplectic field theory and its applications,  in: Proceedings of the International Congress of Mathematicians, Madrid, August 22-30, 2006, Vol. I, pp.217-246. EMS Publishing House, Z\"urich (2007)

\bibitem{EGH} Eliashberg, Y., Givental, A., Hofer, H., Introduction to symplectic field theory, GAFA 2000 Visions in 
Mathematics special volume, part II, pp. 560-673, (2000)

\bibitem{F1} Fabert, O., Contact homology of Hamiltonian mapping tori, Comm. Math. Helv. 85, pp. 203-241 (2010)

\bibitem{F3} Fabert, O., Gravitational descendants in symplectic field theory, Comm. Math. Phys. 302, pp. 113-159 (2011)

\bibitem{FF1} Fabert, O., Higher algebraic structures in Hamiltonian Floer theory I, Arxiv preprint 1310.6014 (2013)

\bibitem{FF2} Fabert, O.,  Higher algebraic structures in Hamiltonian Floer theory II, Arxiv preprint 1412.2682 (2014)
 
\bibitem{MDSa} McDuff, D. and Salamon, D., $J$-holomorphic curves and symplectic topology, AMS Colloquium 
Publications 52. AMS, Providence (2004)

\bibitem{R} Ritter, A., Topological quantum field theory structure on symplectic cohomology, J. Top. 6, pp. 391-489 (2013)

\bibitem{Ro} Rossi, P., Integrable systems and holomorphic curves, ArXiv preprint 0912.0451 (2009)

\bibitem{PSS} Piunikhin, S., Salamon, D. and Schwarz, M., Symplectic Floer-Donaldson theory and quantum cohomology, in: C.B. Thomas (ed.), Contact and Symplectic Geometry, pp. 171-200. Cambridge University Press, Cambridge (1996) 

\end{thebibliography}
\end{document}